\newtheorem{theorem}{Theorem}[section]
\newtheorem{definition}[theorem]{Definition}
\DeclareMathOperator{\lk}{lk}
\DeclareMathOperator{\diam}{diam}
\date{\today\\
   \small Mathematics Subject Classification: 
52B12, 
05E45, 
90C05. 
}
\begin{document}

\title{Not all simplicial polytopes are weakly vertex-decomposable}
\author{{\Large{Jes\'us A. De Loera\footnote{Email: \texttt{deloera@math.ucdavis.edu}}~ and   Steven Klee\footnote{Email: \texttt{klee@math.ucdavis.edu}}}} \\
Department of Mathematics \\
University of California \\
Davis, CA 95616}
\maketitle


\begin{abstract}
In 1980 Provan and Billera defined the notion of weak $k$-decomposability for pure simplicial complexes. They showed the diameter of a weakly $k$-decomposable simplicial complex $\Delta$ is bounded above by a polynomial function of the number of $k$-faces in $\Delta$ and its dimension.  For weakly $0$-decomposable complexes, this bound is linear in the number of vertices and the dimension.  In this paper we exhibit the first examples of non-weakly $0$-decomposable simplicial polytopes.


\end{abstract}

\section{Introduction}

Due to its relevance to the theoretical performance of the simplex method for linear programming, a lot of effort has been invested in bounding the diameter of convex polyhedra \cite{miketoddsurvey2002}. The 1957 Hirsch conjecture for polytopes became one of the most important problems in combinatorial geometry.  For simple polytopes, the Hirsch conjecture stated that any two vertices in a simple $d$-polytope with $n$ facets can be connected by an edge path of length at most $n-d$.  We will work in the polar setting where the Hirsch conjecture for simplicial polytopes asserts that if $P$ is a simplicial $d$-polytope with $n$ facets, then any pair of facets in $P$ can be connected by a facet-ridge path of length at most $n-d$. The Hirsch conjecture remained open until 2010 when F. Santos constructed a $43$-dimensional counterexample to the Hirsch conjecture with $86$ vertices.    (see \cite{Santos-Hirsch-counterexample} and its improvement in \cite{hirschimproved}). 

Despite this great success the best known counterexamples to the Hirsch conjecture have diameter $(1+\epsilon)(n-d)$, while the best known upper bounds on diameter are quasi-exponential in $n$ and $d$ \cite{Kalai-Kleitman} or linear in fixed dimension but exponential in $d$ \cite{Barnette, Larman}.  Unfortunately today we do not even know whether there exists a polynomial bound on the diameter of a polytope in terms of its dimension and number of vertices (see \cite{jesussurvey2011} and references therein for more information). Thus studying diameters of simplicial spheres and polytopes is still the subject of a great interest.

In the 1980's, motivated by the famous Hirsch conjecture, Provan and Billera defined two notions of $k$-decomposability for pure simplicial complexes.  First, they showed any $0$-decomposable simplicial complex satisfies the linear diameter bound posed by the Hirsch conjecture. Moreover $0$-decomposability is strong enough to imply shellability of the complex. Unfortunately,  Lockeberg \cite{Lockeberg} gave an example of a simplicial $4$-polytope on $12$ vertices that is not vertex-decomposable.  Similarly, Santos' counterexample to the Hirsch conjecture also is not vertex-decomposable. In the same paper,  Provan and Billera also showed that for fixed $k$,  the diameter of a \emph{weakly} $k$-decomposable simplicial complex $\Delta$ is bounded above by a linear function of the number of $k$-faces in $\Delta$.  Thus one approach to proving a \emph{linear} upper bound on the diameter of a simplicial polytope in terms of its dimension and number of vertices would be to show that every simplicial polytope is weakly vertex-decomposable (see Sections 5, 6, and 8 in \cite{Klee-Kleinschmidt} for a discussion on decomposability and weak decomposability). Unfortunately, the purpose of this note is to  provide the first examples of simplicial polytopes that are not even weakly vertex-decomposable.

In Section~\ref{background}, we give the necessary definitions pertaining to simplicial complexes, (weak) $k$-decomposability, and transportation polytopes.  In Section~\ref{non-wvd-example}, we give our main results (Theorems~\ref{counterexample} and \ref{counterexample-even}) which provide transportation polytopes in all dimensions $d \geq 5$ whose polars are not weakly vertex-decomposable.


\section{Definitions and background} \label{background}

\subsection{(weak) $k$-decomposability of simplicial complexes}

We recall some basic facts about simplicial complexes, for more details see \cite{Stanley:Combinatorics:1996}.
A \textbf{simplicial complex} $\Delta$ on vertex set $V = V(\Delta)$ is a collection of subsets $F \subseteq V$, called \textbf{faces}, such that if $F \in \Delta$ and $G \subseteq F$, then $G \in \Delta$.  The dimension of a face $F \in \Delta$ is $\dim(F) = |F| - 1$ and the dimension of $\Delta$ is $\dim(\Delta) = \max\{\dim(F): F \in \Delta\}$.  A \textbf{facet} of $\Delta$ is a maximal face under inclusion.  We say $\Delta$ is \textbf{pure} if all of its facets have the same dimension.

The \textbf{link} of a face $F$ in a simplicial complex $\Delta$ is the subcomplex $\lk_{\Delta}(F) = \{G \in \Delta: F \cap G = \emptyset, F \cup G \in \Delta\}.$  The \textbf{antistar} (or deletion) of the face $F$ in $\Delta$ is the subcomplex $\Delta - F = \{G \in \Delta: F \nsubseteq G\}.$  

Given a pure simplicial complex $\Delta$ and facets $F,F' \in \Delta$, the \textbf{distance} from $F$ to $F'$ is the length of the shortest path $F = F_0, F_1, \ldots, F_t = F'$ where the $F_i$ are facets and $F_i$ intersects $F_{i+1}$ along a ridge (a codimension-one face) for all $0 \leq i < t.$   The \textbf{diameter} of a pure simplicial complex, denoted $\diam(\Delta)$, is the maximum distance between any two facets in $\Delta$.  

One approach to trying to establish (polynomial) diameter bounds is to study decompositions of simplicial complexes.  Provan and Billera \cite{Billera-Provan} defined a notion of $k$-decomposability for simplicial complexes and showed that $k$-decomposable complexes satisfy nice diameter bounds. 

\begin{definition}{\rm{(\cite[Definition 2.1]{Billera-Provan})}}
Let $\Delta$ be a $(d-1)$-dimensional simplicial complex and let $0 \leq k \leq d-1$.  We say that $\Delta$ is $\mathbf{k}$\textbf{-decomposable} if $\Delta$ is pure and either
\begin{enumerate} 
\item $\Delta$ is a $(d-1)$-simplex, or 
\item there exists a face $\tau \in \Delta$ (called a \textbf{shedding face}) with $\dim(\tau) \leq k$ such that 
\begin{enumerate}
\item $\Delta - \tau$ is $(d-1)$-dimensional and $k$-decomposable, and 
\item $\lk_{\Delta}(\tau)$ is $(d-|\tau|-1)$-dimensional and $k$-decomposable.
\end{enumerate}
\end{enumerate}
\end{definition}

\begin{theorem}{\rm{(\cite[Theorem 2.10]{Billera-Provan})}}
Let $\Delta$ be a $k$-decomposable simplicial complex of dimension $d-1$.  Then $$\diam(\Delta) \leq f_k(\Delta) - {d \choose k+1},$$ where $f_k(\Delta)$ denotes the number of $k$-dimensional faces in $\Delta$.  
\end{theorem}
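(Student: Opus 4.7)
The plan is to induct on the recursive structure of the $k$-decomposition. In the base case, $\Delta$ is a $(d-1)$-simplex with a single facet, so $\diam(\Delta) = 0$ while $f_k(\Delta) = \binom{d}{k+1}$, and the inequality reduces to $0 \leq 0$. For the inductive step, let $\tau$ be a shedding face with $\dim(\tau) \leq k$, set $\Delta' := \Delta - \tau$ and $L := \lk_\Delta(\tau)$, and note that both $\Delta'$ and $L$ are $k$-decomposable, of dimensions $d-1$ and $d - |\tau| - 1$ respectively, with strictly fewer faces, so the inductive hypothesis applies to each. The key combinatorial identity, obtained by partitioning the $k$-faces of $\Delta$ according to whether they contain $\tau$, is $f_k(\Delta) = f_k(\Delta') + f_{k - |\tau|}(L)$, together with the analogous identity for $\binom{d}{k+1}$ viewed as the number of $k$-faces of a $(d-1)$-simplex.

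Next, I would bound $\diam(\Delta)$ by a case analysis on two facets $F, F' \in \Delta$. If neither contains $\tau$, both are facets of $\Delta'$ and a path of length at most $\diam(\Delta')$ in $\Delta'$ connects them in $\Delta$. If both contain $\tau$, then $F \setminus \tau$ and $F' \setminus \tau$ are facets of $L$; any facet-ridge path between them in $L$ lifts, by adjoining $\tau$ to each intermediate facet, to a path of equal length in $\Delta$, giving a bound involving $\diam(L)$. In the mixed case, the hypothesis that $\Delta'$ remains $(d-1)$-dimensional is used to take a single step from the facet containing $\tau$ to a neighboring facet of $\Delta'$, after which the first case applies. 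Combining these cases with the inductive bounds on $\diam(\Delta')$ and $\diam(L)$ and then substituting the face-count identity should yield $\diam(\Delta) \leq f_k(\Delta) - \binom{d}{k+1}$.

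The main obstacle is tight bookkeeping: the case analysis naturally produces bounds of the form $\diam(\Delta') + c$ and $\diam(L) + c$ for small constants $c$, and one must verify that when combined with the inductive expressions the slack is exactly absorbed by $f_{k-|\tau|}(L)$ and by the telescoping binomial difference $\binom{d}{k+1} - \binom{d-|\tau|}{k+1-|\tau|}$. In other words, the transitional cost of crossing between the star of $\tau$ and the antistar $\Delta'$ must be paid for by the additional $k$-faces appearing in the star but not in $\Delta'$, and ensuring that this matching is sharp (not merely order-of-magnitude) is what requires the most care in the argument.
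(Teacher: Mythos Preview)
The paper does not contain a proof of this statement; it is quoted verbatim as \cite[Theorem 2.10]{Billera-Provan} and used only as background motivation for introducing weak $k$-decomposability. So there is no ``paper's proof'' to compare against, and your outline is essentially the standard Provan--Billera induction on the decomposition.

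That said, there is a genuine bookkeeping mismatch in your sketch. Your face-partition identity $f_k(\Delta)=f_k(\Delta')+f_{k-|\tau|}(L)$ is correct, but the inductive hypothesis applied to $L$ (which is $k$-decomposable of dimension $d-|\tau|-1$) yields
\[
\diam(L)\;\le\; f_k(L)-\binom{d-|\tau|}{k+1},
\]
involving $f_k(L)$ and $\binom{d-|\tau|}{k+1}$, \emph{not} the quantities $f_{k-|\tau|}(L)$ and $\binom{d-|\tau|}{k+1-|\tau|}$ that your telescoping plan requires. These pairs are genuinely different and do not substitute for one another. The case ``both $F,F'\supseteq\tau$'' therefore needs a separate inequality
\[
f_k(\Delta)-f_k(L)\;\ge\;\binom{d}{k+1}-\binom{d-|\tau|}{k+1},
\]
which is true but for a reason your outline does not supply: pick any facet $F_0\supseteq\tau$ and count the $k$-faces of $F_0$ meeting $\tau$; there are exactly $\binom{d}{k+1}-\binom{d-|\tau|}{k+1}$ of them, and each is a $k$-face of $\Delta$ not lying in $L$. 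With this correction the three-case argument closes cleanly; without it, the ``slack absorption'' you describe does not line up.
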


In particular, a $0$-decomposable complex (also called vertex-decomposable) satisfies the Hirsch bound.  One approach to trying to prove the Hirsch conjecture would be to try to show that any simplicial polytope is vertex-decomposable.  In his thesis, Lockeberg \cite{Lockeberg} constructed a simplicial $4$-polytope on $12$ vertices that is not vertex-decomposable (see also \cite[Proposition 6.3]{Klee-Kleinschmidt}\footnote{There is a small typo in \cite[Proposition 6.3]{Klee-Kleinschmidt}.  The listed facet \texttt{aejk} should be \texttt{aehk} instead.  We found this error by entering the listed polytope into Sage~\cite{sage} and realizing that some of its ridges were contained in a unique facet.}).  Of course, Santos' counterexample to the Hirsch conjecture provides another example of a simplicial polytope that is not vertex-decomposable. 

In addition, Provan and Billera defined a weaker notion of $k$-decomposability that does not require any condition on links but still provides bounds on the diameter of the simplicial complex.

\begin{definition}{\rm{(\cite[Definition 4.2.1]{Billera-Provan})}}
Let $\Delta$ be a $(d-1)$-dimensional simplicial complex and let $0 \leq k \leq d-1$.  We say that $\Delta$ is \textbf{weakly $\mathbf{k}$-decomposable} if $\Delta$ is pure and either 
\begin{enumerate}
\item $\Delta$ is a $(d-1)$-simplex or
\item there exists a face $\tau \in \Delta$ with $\dim(\tau) \leq k$ such that $\Delta-\tau$ is $(d-1)$-dimensional and  weakly $k$-decomposable.
\end{enumerate}
\end{definition}

\begin{theorem}{\rm{(\cite[Theorem 4.2.3]{Billera-Provan})}}
Let $\Delta$ be a weakly $k$-decomposable simplicial complex of dimension $d-1$.  Then $$\diam(\Delta) \leq 2f_k(\Delta).$$
\end{theorem}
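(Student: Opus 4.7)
The plan is to induct on $f_k(\Delta)$, exploiting the recursive structure in the definition of weak $k$-decomposability. The base case is immediate: a $(d-1)$-simplex has diameter $0$, which is trivially bounded by $2 f_k(\Delta)$.

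For the inductive step, I would suppose that $\Delta$ admits a shedding face $\tau$ with $\dim(\tau) \leq k$ such that $\Delta' := \Delta - \tau$ is pure of dimension $d-1$ and weakly $k$-decomposable. The inductive hypothesis then gives $\diam(\Delta') \leq 2 f_k(\Delta')$.

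The crux of the argument, which I would prove as an intermediate claim, is that every facet of $\Delta$ lies at distance at most $1$ in $\Delta$ from some facet of $\Delta'$. Facets not containing $\tau$ are themselves facets of $\Delta'$, so it suffices to consider a facet $F \supset \tau$. Pick any vertex $v \in \tau$. Then $F \setminus \{v\}$ is a $(d-2)$-face of $\Delta$ that does not contain $\tau$, hence a face of $\Delta'$. By purity of $\Delta'$, this face extends to a facet $G$ of $\Delta'$, and $G \neq F$ since $\tau \not\subset G$. So $F$ and $G$ share the ridge $F \setminus \{v\}$, giving distance $1$ in $\Delta$.

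Given any two facets $F, F'$ of $\Delta$, I would route through facets $G, G' \in \Delta'$, each at distance at most $1$ from $F$ and $F'$, and apply the inductive bound inside $\Delta'$:
\[
d_\Delta(F, F') \;\leq\; 2 + d_{\Delta'}(G, G') \;\leq\; 2 + 2 f_k(\Delta').
\]
To close the induction it is enough to verify $f_k(\Delta') \leq f_k(\Delta) - 1$. This holds because $\tau$ is contained in at least one $k$-face of $\Delta$ (take $\tau$ itself if $\dim \tau = k$; otherwise enlarge $\tau$ inside any facet containing it, which is possible since $\dim \Delta = d-1 \geq k$), and every $k$-face containing $\tau$ is killed in $\Delta'$. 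The main obstacle, and the place where the hypothesis is really used, is the single-step escape from $F$ to $\Delta'$: it is precisely the purity of $\Delta - \tau$ in dimension $d-1$ that guarantees each $(d-2)$-face of $\Delta'$ extends to a facet of $\Delta'$. Without this, a facet $F \supset \tau$ could be stranded from the antistar and the recursion would collapse.
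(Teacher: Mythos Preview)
The paper does not supply its own proof of this theorem; it is quoted from Provan and Billera \cite{Billera-Provan} as background, so there is nothing in the present paper to compare against.

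That said, your argument is correct and is essentially the canonical proof. The induction on $f_k(\Delta)$ is well-founded because removing a face $\tau$ of dimension at most $k$ destroys at least one $k$-face (your enlargement of $\tau$ inside a facet works since $k\le d-1$). The key lemma---that every facet of $\Delta$ containing $\tau$ is adjacent across a ridge to a facet of $\Delta-\tau$---is exactly where the purity hypothesis on $\Delta-\tau$ is used, and your verification is clean: dropping a vertex $v\in\tau$ from $F$ yields a $(d-2)$-face of $\Delta-\tau$, which purity extends to a full-dimensional facet $G\neq F$. The triangle inequality then gives $\diam(\Delta)\le 2+\diam(\Delta-\tau)\le 2+2f_k(\Delta-\tau)\le 2f_k(\Delta)$. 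One small point worth stating explicitly is that any facet-ridge path in $\Delta-\tau$ is also a facet-ridge path in $\Delta$ (facets of $\Delta-\tau$ remain facets of $\Delta$ since $\Delta$ is pure of the same dimension), so $d_\Delta(G,G')\le d_{\Delta-\tau}(G,G')$; you use this implicitly when routing through $\Delta'$.
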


Again, we say that a weakly $0$-decomposable complex is weakly vertex-decomposable, abbreviated \emph{wvd}.  Based on the hope that diameters of simplicial polytopes have linear upper bounds, it would be natural to try to prove that any simplicial $d$-polytope is weakly vertex-decomposable.  
In Section~\ref{non-wvd-example}, we will provide a family of  simple transportation polytopes whose polars are not weakly vertex-decomposable.  

\subsection{Transportation polytopes}

Our counterexamples are actually found within the classical family of transportation problems. These are classical polytopes that
play an important role in combinatorial optimization and the theory of networks \cite{YKK-book}. For general notions about
polytopes see~\cite{zieglerbook}.
For fixed vectors $\mathbf{a} = (a_1,\ldots,a_m) \in \mathbb{R}^m$ and $\mathbf{b} = (b_1,\ldots,b_n) \in \mathbb{R}^n$, the \textbf{classical} $\mathbf{m \times n}$\textbf{ transportation polytope} $P(\mathbf{a},\mathbf{b})$ is the collection of all nonnegative matrices $X = (x_{i,j})$ with $\sum_{i=1}^mx_{i,j} = b_j$ for all $1 \leq j \leq n$ and $\sum_{j=1}^nx_{i,j} = a_i$ for all $1 \leq i \leq m$.  The vectors
$\mathbf{a,b}$ are often called the \textbf{margins} of the transportation problem.

There is a natural way to associate a complete bipartite graph $K_{m,n}$ with weighted edges to each matrix $X \in P(\mathbf{a},\mathbf{b})$ by placing a weight of $x_{i,j}$ on the edge $(i,j) \in [m] \times [n]$.  We summarize the properties of transportation polytopes that we will use in the following theorem.  These results and their proofs can be found in \cite{Klee-Witzgall} and \cite[Chapter 6]{YKK-book}.

\begin{theorem}\label{TP-properties}
Let $\mathbf{a} \in \mathbb{R}^m$ and $\mathbf{b} \in \mathbb{R}^n$ with $mn > 4$.
\begin{enumerate}
\item The set $P(\mathbf{a},\mathbf{b})$ is nonempty if and only if $\sum_{i=1}^ma_i = \sum_{j=1}^nb_j.$  
\item The dimension of $P(\mathbf{a},\mathbf{b})$ is $(m-1)(n-1)$.
\item The transportation polytope $P(\mathbf{a},\mathbf{b})$ is nondegenerate (hence simple) if and only if the only nonempty sets $S \subseteq [m]$ and $T \subseteq [n]$ for which $\sum_{i \in S} a_i = \sum_{j \in T} b_j$ are $S = [m]$ and $T = [n]$.  \label{nondegenerate}
\item The set $$F_{p,q} = F_{p,q}(\mathbf{a}, \mathbf{b}) := \{X \in P(\mathbf{a},\mathbf{b}): x_{p,q} = 0\},$$ is a facet of $P(\mathbf{a},\mathbf{b})$ if and only if $a_p + b_q < \sum_{i=1}^m a_i.$
\item The matrix $X \in P(\mathbf{a},\mathbf{b})$ is a vertex of $P(\mathbf{a},\mathbf{b})$ if and only if the edges  $\{(i,j) \in K_{m,n}: x_{i,j} > 0\}$ form a spanning tree of $K_{m,n}$.
\end{enumerate}
\end{theorem}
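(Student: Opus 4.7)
The plan is to establish each of the five properties using standard linear-algebra and network-flow arguments from LP theory, treating (1) and (2) as the foundation and (5) as the main structural tool.

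For (1), summing all row equations forces $\sum_i a_i = \sum_j b_j$ on any feasible $X$, proving necessity; for sufficiency, the rank-one matrix $x_{i,j} = a_i b_j / N$, with $N := \sum_i a_i$, is explicitly feasible. For (2), the $m+n$ defining equalities have exactly one linear dependence (the sum of the row equations equals the sum of the column equations), so the constraint system has rank $m+n-1$ and the affine hull of $P(\mathbf a, \mathbf b)$ has dimension $mn - (m+n-1) = (m-1)(n-1)$.

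Next I would prove (5) via standard LP theory: the constraint matrix of the transportation problem is (up to sign) the vertex-edge incidence matrix of $K_{m,n}$, and a collection of $m+n-1$ columns forms a basis precisely when the corresponding edges form a spanning tree of $K_{m,n}$. Vertices of $P(\mathbf a, \mathbf b)$ are therefore in bijection with basic feasible solutions whose positive support is a spanning tree. With (5) in hand, (4) follows by combining the row-$p$, column-$q$, and total-sum equations to derive $x_{p,q} = a_p + b_q - N + \sum_{i \neq p,\, j \neq q} x_{i,j}$. Hence $F_{p,q} = \emptyset$ when $a_p + b_q > N$; when $a_p + b_q = N$, setting $x_{p,q} = 0$ forces every entry off row $p$ and column $q$ to vanish, collapsing $F_{p,q}$ to a single point of dimension $0 < (m-1)(n-1) - 1$ (using $mn > 4$); and when $a_p + b_q < N$, perturbing the rank-one matrix along a $4$-cycle through $(p,q)$ zeros out that entry while keeping all others strictly positive, producing a relatively interior point of $F_{p,q}$ and certifying codimension one.

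The main step is (3). Via (5), nondegeneracy of $P(\mathbf a, \mathbf b)$ (which by standard LP theory implies simpleness) translates to the combinatorial condition that every vertex has support exactly a spanning tree of $K_{m,n}$, of size $m+n-1$, rather than a proper spanning forest. For one direction, if proper subsets $\emptyset \neq S \subsetneq [m]$ and $\emptyset \neq T \subsetneq [n]$ satisfy $\sum_{i \in S} a_i = \sum_{j \in T} b_j$, I would glue a vertex of the sub-transportation polytope on $S \times T$ with margins $(a_i)_{i \in S}$ and $(b_j)_{j \in T}$ to a vertex of the complementary problem on $([m] \setminus S) \times ([n] \setminus T)$, placing zeros elsewhere; the resulting support is a spanning forest with two components, of size only $m+n-2$, producing a degenerate vertex of $P(\mathbf a, \mathbf b)$. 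Conversely, if some vertex $X$ has support of size less than $m+n-1$, its support is a spanning forest with at least two components, and picking any one component $C$ yields proper $S := [m] \cap V(C)$ and $T := [n] \cap V(C)$ with $\sum_{i \in S} a_i = \sum_{j \in T} b_j$. The main obstacle is the gluing argument, which requires verifying that the combined matrix is a genuine vertex of the full polytope $P(\mathbf a, \mathbf b)$; this follows because the union of two vertex-disjoint spanning trees in the bipartite subgraphs forms a forest in $K_{m,n}$ whose edge-columns remain linearly independent in the full constraint matrix.
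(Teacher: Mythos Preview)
The paper does not prove Theorem~\ref{TP-properties} at all: it is stated as a summary of classical facts, with the preceding sentence ``These results and their proofs can be found in \cite{Klee-Witzgall} and \cite[Chapter~6]{YKK-book}.''  So there is no proof in the paper to compare against; you have supplied what the paper deliberately outsources.  Your outline follows exactly the standard arguments one finds in those references---the rank-one witness for (1)--(2), the spanning-tree description of basic feasible solutions for (5), the identity $x_{p,q}=a_p+b_q-N+\sum_{i\ne p,\,j\ne q}x_{i,j}$ for (4), and the gluing/splitting of spanning forests across a balanced partition for (3)---and is essentially correct.

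One small technical point in your treatment of (4): a \emph{single} $4$-cycle through $(p,q)$ need not zero out the $(p,q)$ entry of the rank-one matrix while keeping every other entry strictly positive, since the opposite corner $(p',q')$ may carry no more mass than $(p,q)$ (take for instance $\mathbf a=\mathbf b=(10,1,1)$ and $(p,q)=(1,2)$).  Spread the perturbation over all $4$-cycles through $(p,q)$ instead: setting $t_{p',q'}=\lambda\,a_{p'}b_{q'}/N$ with $\lambda=a_pb_q/\bigl((N-a_p)(N-b_q)\bigr)$ gives $\sum_{p'\ne p,\,q'\ne q}t_{p',q'}=a_pb_q/N$ and $\lambda<1$ precisely when $a_p+b_q<N$, so every off-$(p,q)$ entry stays strictly positive.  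With that adjustment your argument for (4) is complete.
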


In Figure \ref{fig:2by4} we show an example of a $2 \times 4$ transportation polytope. This example demonstrates the content of the above theorem and at the same time it demonstrates that the complexes we discuss in the next section are vertex decomposable in dimension smaller than five. On the left side of the figure we show a Schlegel diagram of the 3-dimensional transportation polytope with margins $(2,2,2,2)$ and $(3,5)$. Its vertices are labeled by $2\times 4$ tables, but we only represent the three upper left entries since they determine the rest of the values automatically (see the middle example for the vertex $012$). The right side of the figure shows the dual simplicial complex which is clearly vertex decomposable.

\begin{figure}
\begin{center}
\includegraphics[width=9cm]{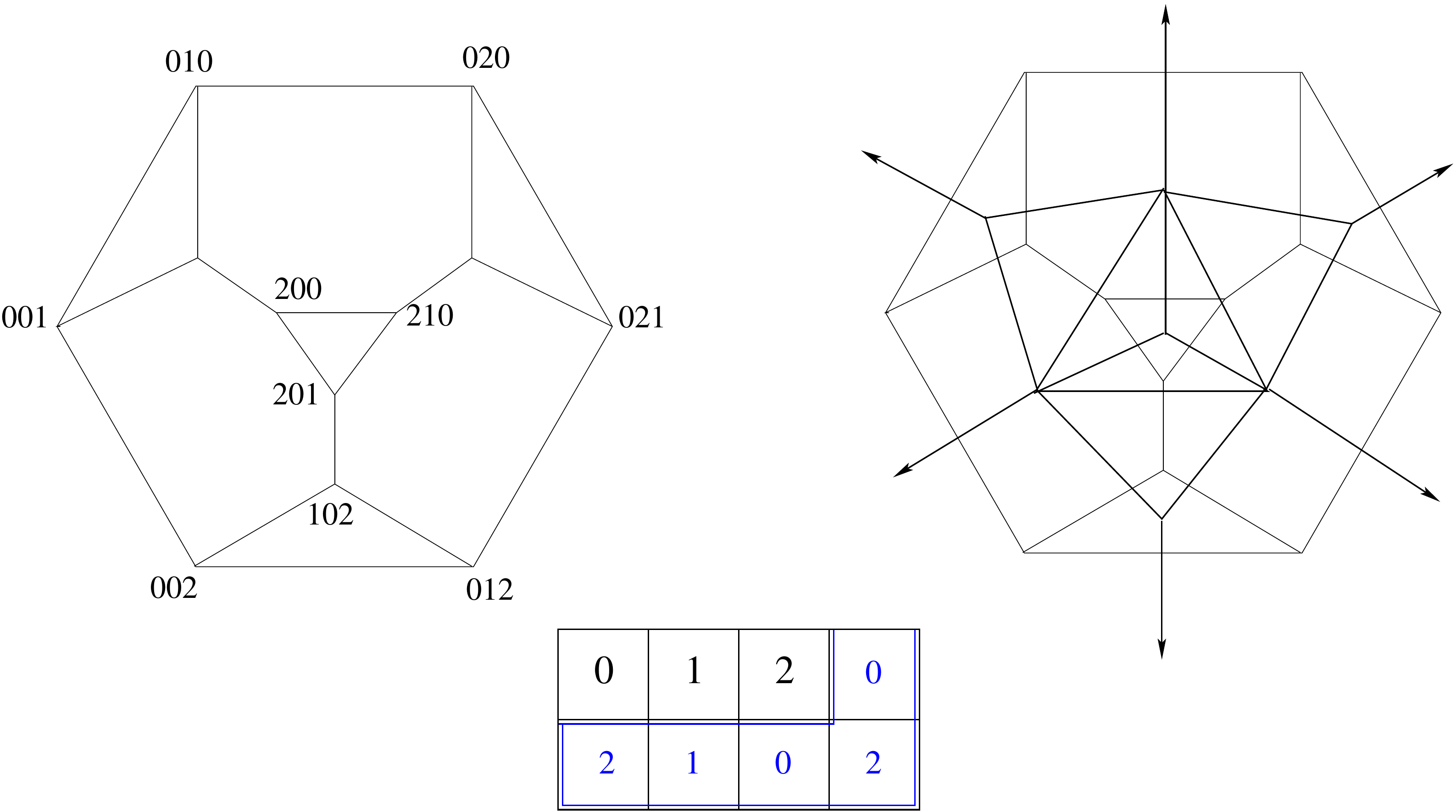}
\caption{A $2 \times 4$ transportation polytope and its polar simplicial complex.}
\label{fig:2by4}
\end{center}
\end{figure}

 It is worth remarking to the reader that transportation polytopes have been heavily studied regarding the diameter of their graphs or 1-skeleton. Unlike the present paper, in most of the literature on the subject, paths move from vertex to vertex along the edges of the polytope instead of moving from facet to facet across ridges, and the Hirsch bound takes the form of $n-d$ where $n$ is the number of facets, instead of vertices for the simplicial set up of this paper.  The best bound for the diameter of the graph of a general transportation polytopes is linear, but still not equal to the Hirsch bound (see \cite{brightwelletal,kimsantossurvey2010}). For our purposes here the most relevant result is about the diameter of $2 \times p$ transportation problems because our counterexamples are polars of those polytopes
 (a result independently obtained by L. Stougie).
  
\begin{theorem}{\rm{(\cite[Theorem 3.5.1]{kim:thesis:2010}) }} \label{kim-bound}
Let $P$  be a classical transportation polytope of size $p\times 2$ with $n\leq 2p$ facets. Then, the dimension of $P$ is $d=p-2$ and the diameter of $P$ is at most $n-d$, thus $P$ satisfies the Hirsch conjecture.
\end{theorem}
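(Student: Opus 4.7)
The plan is to exploit Theorem~\ref{TP-properties}(5), which identifies vertices of $P$ with spanning trees of the bipartite graph $K_{p,2}$. Since such a spanning tree has $p+1$ edges and the two column vertices must lie in a single component, exactly one row vertex has degree two (call it the \emph{pivot} row), while the remaining $p-1$ rows each attach to a unique column. Each vertex of $P$ is therefore encoded by a triple $(r, S_1, S_2)$, where $r \in [p]$ is the pivot and $\{S_1, S_2\}$ partitions $[p]\setminus\{r\}$ according to which column the non-pivot row feeds.

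The next step is to describe the edges of $P$: two vertices are adjacent exactly when their spanning trees differ by a single edge swap. A direct case analysis should show that such swaps come in two flavors---either one row moves between $S_1$ and $S_2$ while the pivot is fixed, or the pivot role is transferred to a neighboring row with a compensating adjustment. By Theorem~\ref{TP-properties}(4), a candidate swap yields a genuine edge of $P$ only when the corresponding $F_{i,j}$ is an actual facet, i.e., when $a_i + b_j < \sum_k a_k$; each missing facet thus forbids a family of swaps.

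With this combinatorial model in place, the diameter bound becomes a routing problem. Given a source $v = (r,S_1,S_2)$ and a target $v' = (r',S_1',S_2')$, I would exhibit an explicit walk from $v$ to $v'$ that iteratively corrects one misplaced non-pivot row per step---using the current pivot as a bridge---and relocates the pivot from $r$ to $r'$ at the appropriate moment. In the generic regime where every candidate $F_{i,j}$ is an actual facet (so $n = 2p$), this scheme easily delivers a walk of length at most $|S_1 \triangle S_1'| + O(1) = O(p)$, well within the target $n-d$.

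The main obstacle, and where I expect the bulk of the proof to live, is the \emph{tight} regime $n < 2p$. Each missing facet $F_{i,j}$ corresponds to an equality $a_i + b_j \geq \sum_k a_k$ that not only forbids a swap but also \emph{rigidifies} the coordinates, forcing certain rows to feed a fixed column in every vertex of $P$. The subtle point is to show that this rigidity shrinks $|S_1 \triangle S_1'|$ in lockstep with the drop in $n$, so that the alternative routes available to the walk still add up to at most $n-d$ steps. A careful bookkeeping of which $F_{i,j}$ are absent, together with how the induced rigidity interacts with the routing scheme above, should then deliver the Hirsch inequality $\diam(P) \leq n-d$.
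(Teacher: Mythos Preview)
There is nothing to compare here: the paper does not prove Theorem~\ref{kim-bound}. The result is quoted from Kim's thesis \cite{kim:thesis:2010} (with an independent attribution to Stougie) and is invoked only to record that the polytopes of Theorems~\ref{counterexample} and~\ref{counterexample-even} satisfy the Hirsch bound despite failing weak vertex-decomposability. No argument for it appears anywhere in the paper, so your sketch cannot be measured against one.

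Regarding the sketch on its own terms: the spanning-tree encoding of the vertices of a $p\times 2$ transportation polytope and the resulting pivot/partition description $(r,S_1,S_2)$ are correct and standard, and the routing strategy you outline is the natural one. What you have written, however, is explicitly a plan rather than a proof---the edge classification (``a direct case analysis should show\ldots''), the length estimate in the generic case, and above all the bookkeeping in the degenerate regime $n<2p$ are all announced but not carried out. One concrete point worth flagging: your own encoding (a spanning tree of $K_{p,2}$ has $p+1$ edges, so each vertex has exactly $2p-(p+1)=p-1$ vanishing coordinates) already shows that $\dim P = p-1$, which disagrees with the ``$d=p-2$'' printed in the theorem statement. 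This appears to be a typo in the paper's transcription of the cited result (and is consistent with Theorem~\ref{TP-properties}(2)); you should note the discrepancy rather than silently inherit it.
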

 
Once more we stress that  the bounds on the diameters of the graphs of transportation polytopes are equivalent to the simplicial diameter for the polars of transportation polytopes.  Thus the result above is quite relevant to this paper, on the other hand, although the diameters for the \emph{graphs} of the polars of transportation polytopes  were proved to satisfy the Hirsch conjecture in \cite{dualtransp1}, those results have no direct relation to our simplicial investigations.


\section{Examples of non-wvd simplicial polytopes}\label{non-wvd-example}

Now we are ready to present a family of $d$-dimensional transportation polytopes $\Delta_d$ for all $d \geq 5$ whose polar (simplicial) polytopes are not weakly vertex-decomposable.  We must consider two cases based on the parity of $d$. 

\begin{theorem}\label{counterexample}
For all $m \geq 3$, let $\Delta_{2m}$ be the simplicial polytope polar to the $2 \times 2m+1$ transportation polytope 
$P(\mathbf{a},\mathbf{b})$ with margins $\mathbf{a} = (2m+1,2m+1)$ and $\mathbf{b} = (2,2,\ldots,2)$.  Then 
$\Delta_{2m}$ is not weakly vertex-decomposable. 
\end{theorem}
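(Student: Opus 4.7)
The plan is to exhibit a combinatorial ``bad face'' obstruction showing that any shedding sequence stalls after only two vertex-deletions, whereas reaching a $(2m-1)$-simplex would require $(4m+2) - 2m = 2m+2 \geq 8$ deletions.

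The first step is to recode facets of $\Delta_{2m}$ usefully. Theorem~\ref{TP-properties} identifies vertices of $P(\mathbf{a},\mathbf{b})$ with spanning trees of $K_{2,2m+1}$; an arithmetic check (using the parities of the margins) shows each such tree has exactly one column vertex $j^* \in [2m+1]$ of degree two, while the remaining $2m$ columns split equally into two groups $S_1, S_2$ of size $m$ attached to the two row vertices. A facet is then encoded by a function $\chi : [2m+1] \to \{R_1, R_2, *\}$ with exactly one $*$ and exactly $m$ preimages of each $R_p$, via the rule $(p,q) \in F_\chi$ iff $\chi(q) = R_p$. Deleting the vertex $(p,q)$ keeps only those facets with $\chi(q) \neq R_p$, so after the sheddings $v_1, \ldots, v_i$, letting $A_i := \{q : (1,q) \in \{v_1,\ldots,v_i\}\}$ and $B_i := \{q : (2,q) \in \{v_1,\ldots,v_i\}\}$, the surviving facets are exactly those with $\chi(q) \neq R_1$ for every $q \in A_i$ and $\chi(q) \neq R_2$ for every $q \in B_i$.

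The crux, and the step I expect to be the main work, is the bad-face lemma: whenever $|A_i| \geq 2$, the complex $\Delta_{2m} - \{v_1,\ldots,v_i\}$ fails to be pure of dimension $2m-1$. I would prove it by fixing two distinct $a_1, a_2 \in A_i$ and choosing $C_2 \subseteq [2m+1] \setminus (B_i \cup \{a_1,a_2\})$ of size exactly $m$; since in the regime of interest $|B_i| \leq 1$ (by the symmetric argument applied inductively at earlier steps), the ambient set has size at least $2m - 2 \geq m$ when $m \geq 3$, so such $C_2$ exists. Set $G := \{(2,j) : j \in C_2\}$, a face of $\Delta_{2m}$ disjoint from the deletion set. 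Any facet $F_\chi \supseteq G$ must have $\chi(j) = R_2$ on all of $C_2$, forcing $S_1 = C_2$ and hence $S_2 \cup \{j^*\} = [2m+1] \setminus C_2$, which is a set of $m+1$ columns containing both $a_1$ and $a_2$. Since $|S_2| = m$, at least one of $a_1, a_2$ must lie in $S_2$, giving $\chi(a_t) = R_1$ for some $t$ and thus $(1, a_t) \in F_\chi$, a deleted vertex --- contradicting the survival of $F_\chi$. So $G$ is a face of $\Delta_{2m} - \{v_1, \ldots, v_i\}$ of dimension $m-1 < 2m-1$ contained in no $(2m-1)$-face, breaking purity.

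Swapping rows yields the symmetric statement when $|B_i| \geq 2$. Combining, any valid shedding sequence satisfies $|A_i|, |B_i| \leq 1$ at every step and therefore has length at most $2$; this contradicts the need for $2m + 2 \geq 8$ deletions to reach the simplex base case, so $\Delta_{2m}$ is not weakly vertex-decomposable. The principal subtlety I anticipate is correctly handling configurations where $A_i \cap B_i \neq \emptyset$, since a ``both-deleted'' column $j_0$ forces $\chi(j_0) = *$ in every surviving facet and pins $j^* = j_0$; however, the pigeonhole step above only uses the identity $|S_2| = m$ and the fact that $a_1, a_2 \in S_2 \cup \{j^*\}$, so it is insensitive to which column carries the $*$ and the argument goes through uniformly.
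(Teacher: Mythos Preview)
Your proof is correct and follows essentially the same strategy as the paper: both arguments observe that among the first three shed vertices two must come from the same row (say row $1$), and then exhibit a surviving face none of whose containing facets avoids the deleted vertices, so the antistar fails to be pure of dimension $2m-1$. The only cosmetic difference is the choice of witness face: the paper uses the ridge $\{u_3,\ldots,u_{m+1},v_{m+2},\ldots,v_{2m+1}\}$ (which lies in exactly the two facets $F,F'$, both killed), whereas you use the smaller $(m-1)$-face $G=\{(2,j):j\in C_2\}$ and a pigeonhole on $j^*$ to show each of its $m+1$ containing facets is killed; the paper also explicitly exhibits a surviving $(2m-1)$-face to certify full dimension, a step you can legitimately omit since failure of either purity or full-dimensionality already blocks weak vertex-decomposability.
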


\begin{proof}
Let $u_i$ (respectively $v_i$) denote the vertex in $\Delta_{2n}$ corresponding to the facet $F_{1,i}$ (respectively $F_{2,i}$ ) of $P(\mathbf{a},\mathbf{b})$.  Let $U = \{u_1,\ldots,u_{2m+1}\}$ and $V = \{v_1,\ldots,v_{2m+1}\}$.  We claim that the facets of $\Delta_{2m}$ are precisely those sets of the form $A \cup B$ where 
\begin{itemize}
\item $A \subseteq U$, 
\item $B \subseteq V$, 
\item $|A| = |B| = m$, and 
\item $A \cup B$ contains at most one element from each set $\{u_j,v_j\}$. 
\end{itemize}

Any facet of $\Delta_{2m}$ can be decomposed as $A \cup B$ with $A \subseteq U$ and $B \subseteq V$.  Since any spanning tree of $K_{2,2m+1}$ has $2m+2$ edges, any facet of $\Delta_{2m}$ has $2(2m+1) - (2m+2) = 2m$ vertices. Suppose that there is a facet $A \cup B$ of $\Delta_{2m}$ with $|A| > m$.  We may assume without loss of generality that $u_1,\ldots,u_{m+1} \in A$.  This means there is a matrix $X \in P(\mathbf{a},\mathbf{b})$ with $x_{1,1},x_{1,2},\ldots,x_{1,m+1} = 0$.  Thus $x_{2,1} = x_{2,2} = \cdots = x_{2,m+1} = 2$ and the sum of the elements in the second row of $X$ exceeds $2m+1$.  Similarly, no facet of $\Delta_{2m}$ can contain both $u_j$ and $v_j$ since $P(\mathbf{a},\mathbf{b})$ does not contain a matrix in which $x_{1,j} = x_{2,j} = 0$. 

Suppose that $\Delta_{2m}$ is weakly vertex-decomposable and its vertices can be shed in the order $z_1,z_2,z_3,\ldots, z_t$.  We will show that the complex obtained from $\Delta_{2m}$ by removing either $z_1$ and $z_2$ or $z_1$, $z_2$ and $z_3$ is not pure.  By the pigeonhole principle, two of the vertices among $\{z_1,z_2,z_3\}$ come from either $U$ or $V$, and we may assume without loss of generality that these two vertices come from $U$.  Further, since the symmetric group $\mathfrak{S}_{2m+1}$ acts transitively on the columns of the $2 \times (2m+1)$ contingency table defining $P(\mathbf{a}, \mathbf{b})$, we need only consider two possibilities: either $\{z_1,z_2\} = \{u_1,u_2\}$ or $\{z_1,z_2,z_3\} = \{u_1,v,u_2\}$ for some $v \in \{v_1,v_2,v_3\}$.  

In the former case, let $\Gamma_1$ be the simplicial complex obtained from $\Delta_{2m}$ by removing vertices $z_1$ and $z_2$, and consider the following facets of $\Delta_{2m}$:
\begin{eqnarray*}
F &=& \{u_1,u_3,u_4,\ldots,u_{m+1}, v_{m+2},\ldots,v_{2m+1}\}, \\
F' &=& \{u_2,u_3,u_4,\ldots,u_{m+1}, v_{m+2},\ldots,v_{2m+1}\}, \text{ and } \\
G &=& \{v_2,v_3,\ldots,v_{m+1}, u_{m+2}, \ldots,u_{2m+1}\}.
\end{eqnarray*}

Then $\Gamma_1$ is $(2m-1)$-dimensional since it contains $G$ as a facet, but $F - \{u_1\} = F' - \{u_2\}$ is a $(2m-2)$-face of $\Gamma_1$ that is not contained in a $(2m-1)$-face.  Thus $\Gamma_1$ is not pure.  The following partially filled contingency table shows that $F$ and $F'$ are the only facets of $\Delta_{2m}$ that contain $F-\{u_1\} = F'-\{u_2\}$.   

\begin{center}
\begin{tabular}{p{.75cm}p{.75cm}p{.75cm}p{.75cm}p{.75cm}p{.75cm}p{.75cm}p{.75cm}p{1cm}p{.75cm}}
\multicolumn{1}{c}{1} & \multicolumn{1}{c}{2} & \multicolumn{1}{c}{3} & \multicolumn{1}{c}{$\cdots$} & \multicolumn{1}{c}{$m$} & \multicolumn{1}{c}{$m+1$} &\multicolumn{1}{c}{ $m+2$} & \multicolumn{1}{c}{$\cdots$} & \multicolumn{1}{c}{$2m+1$} & \\ \cline{1-9}
\multicolumn{1}{|c}{$x_{1,1}$}  & \multicolumn{1}{|c}{$x_{1,2}$}  & \multicolumn{1}{|c}{$0$}  &    \multicolumn{1}{|c}{$\cdots$}      &  \multicolumn{1}{|c}{$0$}   &   \multicolumn{1}{|c}{$0$}    &   \multicolumn{1}{|c}{$2$}    &    \multicolumn{1}{|c}{$\cdots$}      &    \multicolumn{1}{|c}{$2$}    & \multicolumn{1}{|c}{$2m+1$} \\ \cline{1-9}
\multicolumn{1}{|c}{$x_{2,1}$}  & \multicolumn{1}{|c}{$x_{2,2}$}  & \multicolumn{1}{|c}{$2$}  &   \multicolumn{1}{|c}{$\cdots$}       & \multicolumn{1}{|c}{$2$}    &  \multicolumn{1}{|c}{$2$}     &   \multicolumn{1}{|c}{$0$}    &    \multicolumn{1}{|c}{$\cdots$}      &  \multicolumn{1}{|c}{$0$}      & \multicolumn{1}{|c}{$2m+1$} \\ \cline{1-9}
\multicolumn{1}{c}{$2$} & \multicolumn{1}{c}{$2$} & \multicolumn{1}{c}{$2$} & \multicolumn{1}{c}{$2$} & \multicolumn{1}{c}{$2$} & \multicolumn{1}{c}{$2$} & \multicolumn{1}{c}{$2$} & \multicolumn{1}{c}{$2$} & \multicolumn{1}{c}{$2$} &
  
\end{tabular}
\end{center}

In the latter case, let $\Gamma_2$ be the simplicial complex obtained from $\Delta_{2m}$ by removing vertices $z_1$, $z_2$, and $z_3$.  Again, $\Gamma_2$ is $(2n-1)$-dimensional since it contains the facet
\begin{eqnarray*}
G' = \{v_1,v_2,\ldots,v_{m+1},u_{m+2},\ldots,u_{2m+1}\} - \{v\},
\end{eqnarray*}
of $\Delta_{2m}$, but $\Gamma_2$ is not pure since the face $F - \{u_1\} = F' - \{u_2\}$ is not contained in any $(2m-1)$-face of $\Gamma_2$.
\end{proof}

A similar construction provides odd-dimensional transportation polytopes whose polars are simple and non-wvd.  Unfortunately, we must tweak the construction presented in Theorem~\ref{counterexample} because the margins $\mathbf{a}=(2m,2m)$ and $\mathbf{b}=(2,2,\ldots,2)$ yield a degenerate transportation polytope by Theorem~\ref{TP-properties}.\ref{nondegenerate}.  The proof of the following theorem is identical to that of Theorem~\ref{counterexample}.

\begin{theorem}\label{counterexample-even}
For all $m \geq 3$, let $\Delta_{2m-1}$ be the simplicial polytope dual to the $2 \times 2m$ transportation polytope $P(\mathbf{a},\mathbf{b})$ with $\mathbf{a} = (2m-1,2m+1)$ and $\mathbf{b} = (2,2,\ldots,2)$.  Then $\Delta_{2m-1}$ is not weakly vertex-decomposable.
\end{theorem}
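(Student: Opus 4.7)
The plan is to mirror the proof of Theorem~\ref{counterexample} with only cosmetic changes to accommodate the unequal row margins. First I would verify that $P(\mathbf{a},\mathbf{b})$ is nondegenerate (hence simple) via Theorem~\ref{TP-properties}.\ref{nondegenerate}: $\sum_i a_i = 4m = \sum_j b_j$, and for every nonempty proper $S \subsetneq [2]$ the partial sum $\sum_{i \in S} a_i \in \{2m-1, 2m+1\}$ is odd whereas $\sum_{j \in T} b_j$ is always even, so no degenerate subset sum exists. Labeling $u_i = F_{1,i}$ and $v_i = F_{2,i}$ for $1 \le i \le 2m$, a spanning tree of $K_{2,2m}$ has $2m+1$ edges, so each facet $A \cup B$ of $\Delta_{2m-1}$ has $|A|+|B|=2m-1$. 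The row-sum constraints $a_1=2m-1$ and $a_2=2m+1$ force $|A| \le m$ and $|B| \le m-1$; combined with $|A|+|B|=2m-1$ this yields $|A|=m$ and $|B|=m-1$. The forbidden-pair rule $\{u_j,v_j\} \not\subseteq A \cup B$ carries over verbatim.

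Assuming $\Delta_{2m-1}$ is weakly vertex-decomposable via shedding order $z_1,z_2,\ldots,z_t$, pigeonhole forces two of $z_1,z_2,z_3$ into the same part of $U \sqcup V$. Using the transitive action of $\mathfrak{S}_{2m}$ on columns, the ``two in $U$'' scenario reduces to $\{z_1,z_2\} = \{u_1,u_2\}$ or $\{z_1,z_2,z_3\} = \{u_1,v,u_2\}$ with $v \in \{v_1,v_2,v_3\}$. I would exhibit the candidate facets
\begin{align*}
F  &= \{u_1,u_3,\ldots,u_{m+1}\} \cup \{v_{m+2},\ldots,v_{2m}\},\\
F' &= \{u_2,u_3,\ldots,u_{m+1}\} \cup \{v_{m+2},\ldots,v_{2m}\},
\end{align*}
and verify that they are the only facets containing the common ridge $F \setminus \{u_1\} = F' \setminus \{u_2\}$: any extension must enlarge $A$ (since $|B|=m-1$ is already saturated), and the size together with the forbidden-pair rule allow only $u_1$ or $u_2$. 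An auxiliary facet $\{u_{m+1},\ldots,u_{2m}\} \cup \{v_2,\ldots,v_m\}$ (with a single $v_j$ swapped out if the three-vertex subcase requires it) avoids the shed vertices for $m \ge 3$, so the post-shedding subcomplex is $(2m-2)$-dimensional while the ridge is a maximal $(2m-3)$-face there, violating purity.

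The genuinely new case, and what I expect to be the main (but still routine) bookkeeping obstacle, is ``two in $V$'': because $a_1 \ne a_2$ there is no row-swap symmetry to reduce it to ``two in $U$''. I would handle it with the mirrored pair
\begin{align*}
\widetilde{F}  &= \{v_1,v_3,\ldots,v_m\} \cup \{u_{m+1},\ldots,u_{2m}\},\\
\widetilde{F'} &= \{v_2,v_3,\ldots,v_m\} \cup \{u_{m+1},\ldots,u_{2m}\},
\end{align*}
and an analogous auxiliary facet of the form $\{u_1,u_3,\ldots,u_{m+1}\} \cup \{v_{m+2},\ldots,v_{2m}\}$ (adjusted if the shed $u \in \{u_1,u_2,u_3\}$ collides with it). The hypothesis $m \ge 3$ enters in exactly one place but crucially: in the three-vertex subcase the shed vertex $u$ may equal $u_3$, and one needs $u_3 \notin \{u_{m+1},\ldots,u_{2m}\}$, i.e.\ $m+1 > 3$, so that the ridge $\widetilde F \setminus \{v_1\}$ still lives in the subcomplex. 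Once this is verified, the non-purity argument of Theorem~\ref{counterexample} transfers line-for-line.
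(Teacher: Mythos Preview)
Your proposal is correct and follows the same approach as the paper, which simply asserts that ``the proof \ldots\ is identical to that of Theorem~\ref{counterexample}.'' In fact you are more careful than the paper: because the row margins $(2m-1,2m+1)$ break the $U\leftrightarrow V$ symmetry, the ``two shed vertices in $V$'' case genuinely requires its own pair $\widetilde F,\widetilde F'$ (with $|A|=m$ already saturated so only $v_1$ or $v_2$ can extend the ridge), and your observation that $m\ge 3$ is needed precisely so that $u_3\notin\{u_{m+1},\ldots,u_{2m}\}$ is exactly the missing verification behind the paper's one-line claim.
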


Despite the fact that the polars to these transportation polytopes are not weakly vertex-decomposable, we see from Theorem~\ref{kim-bound} that the polytopes $P(\mathbf{a},\mathbf{b})$ in Theorems~\ref{counterexample} and \ref{counterexample-even} still satisfy the Hirsch bound. Note that our counterexamples yield an infinite family of non-vertex decomposable polytopes.


\section*{Acknowledgements}
The first author was partially supported by NSF grant DMS-0914107, the second author was supported by NSF VIGRE grant DMS-0636297.  We truly grateful to Scott Provan for sharing his thoughts on weak vertex-decomposability of simplicial polytopes and to Peter Kleinschmidt for helping us find the original text of Lockeberg's thesis \cite{Lockeberg} so that we could resolve the typographical error in \cite[Proposition 6.3]{Klee-Kleinschmidt}.  We are also grateful to Yvonne Kemper for helpful conversations during the cultivation of the ideas presented in this paper.



\bibliography{wvd-biblio}
\bibliographystyle{plain}

\end{document}